\newcommand{\Tr}{{{\rm Tr}}}
\newcommand{\BC}{{\mathbb {C}}}
 \newcommand{\BR}{{\mathbb {R}}}
 \newcommand{\BZ}{{\mathbb {Z}}}
 \newcommand{\GL}{{\mathrm{GL}}}
 \renewcommand{\Im}{{\mathrm{Im}\,}}
\newcommand{\PGL}{{\mathrm{PGL}}} 
\renewcommand{\Re}{{\mathrm{Re}\,}}
\def\-{^{-1}}
\def\-{^{-1}}
\def\lp {\left (}
\def\rp {\right )}
\def\Voronoi{Vorono\" \i \hskip 3 pt}
\def\SS{\mathscr S}
\def\boldJ {\boldsymbol J}
\def\nwedge {\hskip - 2 pt \wedge \hskip - 2 pt }
\g@addto@macro\normalsize{\setlength\abovedisplayskip{3pt}}
\g@addto@macro\normalsize{\setlength\belowdisplayskip{3pt}}
\newcommand{\delete}[1]{}
 \newcommand{\SL}{{\mathrm{SL}}}
\theoremstyle{plain}
\newtheorem{thm}{Theorem}[section] \newtheorem{cor}[thm]{Corollary}
\newtheorem{lem}[thm]{Lemma}  \newtheorem{prop}[thm]{Proposition}
\numberwithin{equation}{section}
\newtheorem{acknowledgement}{Acknowledgements}
\begin{document}

	\title[On the Fourier transform of Bessel functions over complex numbers---I]{On the Fourier Transform of Bessel Functions over \\ Complex Numbers---I: the Spherical Case}
	
	\author{Zhi Qi}
	
	\address{Department of Mathematics\\ Rutgers University\\Hill Center\\110 Frelinghuysen Road\\Piscataway, NJ 08854-8019\\USA}
	\email{zhi.qi@rutgers.edu}
	
	\subjclass[2010]{42B10, 33C10}
	\keywords{the Fourier transform, spherical Bessel functions}

	\begin{abstract}
		In this note, we  prove a formula for the Fourier transform of spherical Bessel functions over   complex numbers, viewed as the complex analogue of the classical formulae of Hardy and Weber. The formula has strong representation theoretic motivations in the Waldspurger correspondence over the complex field.
	\end{abstract}
	
	\maketitle

\section{Introduction}

\subsection{Representation Theoretic Motivations}

It is observed in the article \cite{BaruchMao-Real} of Baruch and Mao that two classical identities due to Weber and Hardy on  Bessel functions can be used to realize the Shimura-Waldspurger correspondence between representations of $\PGL_2 (\BR)$ and genuine representations of $\widetilde {\SL}_2 (\BR)$, %that is the (nontrivial) double cover of $\SL_2 (\BR)$, 
when the involved  Bessel functions are attached to such representations in a certain way. 

First, we have the following formula of Weber (see \cite[1.13 (25), 2.13 (27)]{ET-I} and \cite[13.3 (5)]{Watson}),
\begin{equation}\label{0eq: Weber's formula}
\int_0^\infty \frac 1 {\sqrt x}  J_{\nu} \lp 4 \pi \sqrt x\rp e \lp {\pm x y} \rp   {d x}  = \frac 1 {\sqrt {2 y} } e \lp {\mp  \lp \frac 1 {2 y} - \frac 1 8  \nu - \frac 1 8 \rp} \rp J_{\frac 1 2 \nu} \lp \frac {\pi} y \rp,
\end{equation}
for $y > 0$,  where $e(x) = \exp \lp {2\pi i x} \rp$ and $J_\nu (x)$ is the  Bessel function of the first kind of order $\nu$. This formula is valid when  $\Re \nu > - 1$. In other words, the Fourier transform of $ x^{- \frac 1 2}  J_{\nu} \big( 4 \pi   x^{\frac 1 2} \big)  $ is equal to $ x^{-\frac 1 2} J_{\frac 1 2 \nu} \lp \pi  x^{-1} \rp  $ up to an exponential factor. 
Second, the identity of Hardy is of the same nature, which involves the modified Bessel function $K_{\nu}$ of the second kind of order $\nu$, (see \cite[1.13 (28), 2.13 (30)]{ET-I})
\begin{equation}\label{0eq: Hardy's formula}
\begin{split}
\int_0^\infty \frac 1 {\sqrt x}  K_{\nu} \lp 4 \pi \sqrt x\rp e \lp {\pm x y} \rp   {d x} & =   - \frac {\pi} {2 \sin ( \pi \nu)}  \frac 1 {\sqrt {2 y} } e \lp {\pm  \lp \frac 1 {2 y} + \frac 1 8 \rp} \rp \\
& \lp e \lp \pm \frac 1 8 \nu \rp J_{\frac 1 2 \nu} \lp \frac {\pi} y \rp - 
 e \lp \mp \frac 1 8 \nu \rp J_{- \frac 1 2 \nu} \lp \frac {\pi} y \rp \rp,
\end{split}
\end{equation}
with $|\Re \nu | < 1$.

Taking $\nu = 2 k - 1$  in \eqref{0eq: Weber's formula}, with $k$ a positive integer, %\in \BN_+ = \{  1, 2, ... \}$, 
the Bessel function of order $2 k - 1$, respectively $k - \frac 1 2$, is attached to a discrete series representation of $\PGL_2 (\BR)$, respectively $\widetilde {\SL}_2 (\BR)$. Thus,  in this case, \eqref{0eq: Weber's formula} should be interpreted as the local ingredient at the real place of the correspondence due to  Shimura, Shintani and Waldspurger between cusp forms of weight $2 k$ and cusp forms of weight $ k + \frac 1 2$. Likewise, two formulae \eqref{0eq: Weber's formula} and \eqref{0eq: Hardy's formula}, with $\nu = 2 i t$ for $t $ real or $\nu = 2 t$ for $0 < t < \frac 1 2$, may be combined to realize the Shimura-Waldspurger correspondence for principal or complementary series representations. See \cite{BaruchMao-Real}.

In the framework of the relative trace formula of Jacquet, the theory in \cite{BaruchMao-Real} along with the corresponding non-archimedean theory in \cite{BaruchMao-NA} is used in \cite{BaruchMao-Global} to produce a Waldspurger-type formula over a {\it totally real} number field.

\subsection{Main Theorem}

In this note, we shall consider the Fourier transform of spherical Bessel functions over $\BC$. Such Bessel functions are associated to spherical %not-necessarily-unitary principal series 
irreducible representations of $\PGL_2 (\BC)$. According to \cite[\S 15.3, 18.2]{Qi-Bessel},  we define
\begin{equation}\label{1eq: n=2, C}
\boldJ_{\mu} \lp z \rp = \frac {2 \pi^2} {\sin (2\pi \mu)}  \lp J_{- 2\mu   } \lp 4 \pi \sqrt z \rp J_{- 2\mu    } \big( 4 \pi \sqrt {\overline z} \big)    - J_{ 2\mu   } \lp 4 \pi \sqrt z \rp J_{ 2\mu }  \big( 4 \pi \sqrt {\overline z} \big)     \rp. 
\end{equation} 
It is understood that the right hand side should be in its limit form when the order $ \mu $ is a half integer.
Moreover, the expression in \eqref{1eq: n=2, C} is independent of the choice of $\arg z$.

We shall prove the following analogue of the  formulae of Weber and Hardy (\ref{0eq: Weber's formula}, \ref{0eq: Hardy's formula}).

\begin{thm}\label{thm: main 1} 
	Suppose that $|\Re \mu| < \frac 1 2$. 
	We have the identity
	\begin{equation}\label{eq: main 1}
	\begin{split}
	 \int_{0}^{2 \pi} \int_0^\infty \boldJ_{ \mu } \lp x e^{i\phi} \rp  e (- 2 x y \cos (\phi + \theta) )    d x  d \phi  
	= \frac {1} {4 y}   e\lp \frac {\cos \theta } y \rp  \boldJ_{  \frac 12 \mu } \lp  \frac 1 {
		16 y^2  e^{2 i \theta}  } \rp ,
	\end{split}
	\end{equation}
	with $y \in (0, \infty)$ and $\theta \in [0, 2 \pi)$. 
\end{thm}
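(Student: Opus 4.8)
The plan is to reduce the two--dimensional transform to a product of two one--dimensional Weber integrals \eqref{0eq: Weber's formula}, applied once with order $-2\mu$ and once with order $+2\mu$. The first move is to split the additive character: since $2\cos(\phi+\theta)=e^{i(\phi+\theta)}+e^{-i(\phi+\theta)}$, writing $z=xe^{i\phi}$ gives
\[ e(-2xy\cos(\phi+\theta))=e(-ye^{i\theta}z)\,e(-ye^{-i\theta}\bar z). \]
Substituting the definition \eqref{1eq: n=2, C}, each of the two summands of the integrand then factors exactly as a function of $\sqrt z$ times a function of $\sqrt{\bar z}$: for the order $\mp2\mu$ piece it reads $\big(J_{\mp2\mu}(4\pi\sqrt z)\,e(-ye^{i\theta}z)\big)\big(J_{\mp2\mu}(4\pi\sqrt{\bar z})\,e(-ye^{-i\theta}\bar z)\big)$.

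Next I would pass to the variable $s=\sqrt z=\sqrt x\,e^{i\phi/2}$. As $x$ ranges over $(0,\infty)$ and $\phi$ over $(0,2\pi)$, the point $s$ sweeps the upper half--plane $\BH=\{\Im s>0\}$ once, and $dx\,d\phi=4\,dA(s)$ for the Euclidean area element. Each summand of the transform thereby becomes $4\int_{\BH}g(s)\,h(\bar s)\,dA(s)$ with $g(s)=J_{\mp2\mu}(4\pi s)\,e(-ye^{i\theta}s^2)$ and $h(s)=J_{\mp2\mu}(4\pi s)\,e(-ye^{-i\theta}s^2)$. Up to the elementary branch factor $s^{\mp2\mu}$ both $g$ and $h$ are entire, and each Gaussian factor $e(-ye^{\pm i\theta}s^2)$ becomes genuinely decaying once $s$ is rotated into an appropriate sector.

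The crux --- and the step I expect to be the main obstacle --- is to separate $\int_{\BH}g(s)\,h(\bar s)\,dA(s)$ into a product $\big(\int_{C_1}g\big)\big(\int_{C_2}h\big)$ of one--dimensional contour integrals. The difficulty is that $s$ and $\bar s$ are pinned to the diagonal of $\BC^2$ and hence are not independent, whereas the integrand is the restriction to that diagonal of the holomorphic product $g(s_1)h(s_2)$. Writing $s=\sigma+i\tau$, I would move the $\sigma$-- and $\tau$--contours into the complex domain and invoke Cauchy's theorem together with the Gaussian decay of $g$ and $h$, decoupling the two variables onto rays $C_1,C_2$ along which $r=s^2$ traverses $(0,\infty)$. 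Making this rigorous means controlling convergence at both ends: near $s=0$ the product is $O(|s|^{-4\Re\mu})$, area--integrable exactly when $\Re\mu<\tfrac12$, while near $s=\infty$ it decays only like $|s|^{-1}$ against a unimodular phase, so absolute convergence is recovered only after the rotation that renders that phase Gaussian. This is precisely where the hypothesis $|\Re\mu|<\tfrac12$ is used.

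With the variables separated, the substitution $r=s^2$ turns each factor into one half of a Weber integral \eqref{0eq: Weber's formula}, analytically continued to the complex parameter $\eta=ye^{\pm i\theta}$ (the continuation being exactly what the contour rotation realizes). Weber's formula then yields $J_{\mp\mu}\big(\tfrac\pi y e^{-i\theta}\big)$ and $J_{\mp\mu}\big(\tfrac\pi y e^{i\theta}\big)$; since $4\pi\sqrt w=\tfrac\pi y e^{-i\theta}$ and $4\pi\sqrt{\bar w}=\tfrac\pi y e^{i\theta}$ for $w=(16y^2e^{2i\theta})^{-1}$, these are exactly the Bessel factors appearing in the two summands of $\boldJ_{\mu/2}(w)$. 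It then remains to collect the scalars: the two exponential prefactors multiply to $e(\cos\theta/y)$ because $\tfrac1{2ye^{i\theta}}+\tfrac1{2ye^{-i\theta}}=\tfrac{\cos\theta}y$, and the moduli give the overall $\tfrac1{4y}$. The residual $\mu$--dependent phases $e(\pm\mu/2)$ from Weber's exponential, together with the branch factors of $J_{\pm\mu}$ incurred as $\eta$ is continued through $\arg\eta=\theta$, must be matched against the normalization of $\boldJ_{\mu/2}$; here the identity $\sin2\pi\mu=2\sin\pi\mu\cos\pi\mu$ converts $\tfrac{2\pi^2}{\sin2\pi\mu}$ into the $\tfrac{2\pi^2}{\sin\pi\mu}$ demanded by $\boldJ_{\mu/2}$, and this reconciliation of phases with branch choices is the most delicate bookkeeping of the argument. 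The half--integer values of $\mu$ follow by the limiting procedure indicated after \eqref{1eq: n=2, C}, and independence of the right--hand side from $\arg z$ is inherited from that of $\boldJ$. A more elementary alternative, sidestepping the two--dimensional contour shift, is to expand \eqref{1eq: n=2, C} as a double power series, carry out the $\phi$--integral through the integral representation of the integer--order $J_{m-n}(4\pi xy)$, evaluate the resulting Weber--Schafheitlin $x$--integrals in closed form, and resum --- at the price of justifying the interchange of summation and integration.
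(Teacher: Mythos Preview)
Your factorization of the integrand is correct, but the decoupling step---which you yourself flag as the crux---is a genuine gap, and I do not believe it can be filled in the form you sketch. The integral $\int_{\BH} g(s)\,h(\bar s)\,dA(s)$ is an integral over a totally real $2$--cycle in $\BC^2$ (the anti-diagonal $\{(s,\bar s)\}$), and there is no general principle that lets one deform such a cycle to a product cycle $C_1\times C_2$. Concretely: writing $s=\sigma+i\tau$ and complexifying $\sigma$ and $\tau$ separately only moves you to another anti-diagonal-type cycle $\{(\sigma e^{i\alpha}+i\tau e^{i\beta},\ \sigma e^{i\alpha}-i\tau e^{i\beta})\}$, which is still not a product of two curves in the $s_1$-- and $s_2$--factors. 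Moreover, the original double integral converges only as an \emph{iterated} integral (first $x$, then $\phi$); your passage to a $2$--dimensional area integral over $\BH$ already presupposes absolute convergence, which fails. Even in the toy case $g(s)=h(s)=e^{-as^2}$ with $a>0$, the area integral $\int_{\BH}e^{-2a\,\Re(s^2)}\,dA$ diverges on the sectors where $\cos 2\arg s<0$, and in your oscillatory situation things are no better.

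The paper avoids this entirely by keeping the iterated structure and using a \emph{different} Weber identity for the radial integral: not the one--Bessel formula \eqref{0eq: Weber's formula}, but Weber's \emph{second} exponential integral
\[
\int_0^\infty J_\nu(a t)J_\nu(b t)\,e^{-p^2t^2}\,t\,dt=\frac{1}{2p^2}\,e^{-(a^2+b^2)/(4p^2)}\,I_\nu\!\lp\frac{ab}{2p^2}\rp,
\]
applied with $a=4\pi e^{i(\phi-\theta)/2}$, $b=\bar a$, $p^2=\mp 2\pi i y\cos\phi$ (and a careful extension to $\arg p=\pm\tfrac\pi4$). This collapses the product $J_{\pm 2\mu}(4\pi\sqrt z)J_{\pm 2\mu}(4\pi\sqrt{\bar z})$ to a \emph{single} Bessel function of $2\pi/(y\cos\phi)$, with no decoupling needed. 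The remaining angular integral is then a known formula from the Bateman tables,
\[
\int_1^\infty\big(J_{-\nu}+J_{\nu}\big)(at)\,\frac{\cos\big(c\sqrt{t^2-1}\big)}{\sqrt{t^2-1}}\,dt
=\frac{\pi\cot(\tfrac{\pi\nu}{2})}{2}\Big(J_{-\nu/2}(w)J_{-\nu/2}(\bar w)-J_{\nu/2}(w)J_{\nu/2}(\bar w)\Big),
\]
with $w=\tfrac12\big(\sqrt{a^2-c^2}+ic\big)$, after the substitution $t=1/\cos\phi$. So the logical flow is the reverse of yours: the radial step takes \emph{two Bessels to one}, and the angular step takes \emph{one Bessel back to two}. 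Your phase bookkeeping (the $\sin 2\pi\mu = 2\sin\pi\mu\cos\pi\mu$ reconciliation) does show up, but as a simple trigonometric combination of the four quadrant contributions, not as a branch-tracking exercise across a two-variable contour shift.
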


\delete{
The integral in \eqref{eq: main 1} only converges as iterated double integral. It will therefore be more suitable to interpret the identity \eqref{eq: main 1} in the sense of distributions. %For this, we need some notation and definitions.

%Let $d z  $ be \textit{twice} the ordinary Lebesgue measure on $\BC$. 
%Define $\Tr (z) = z + \overline z$. Denote $\BCx = \BC \smallsetminus \{0\}$.

Let $\mathscr S (\BC)$ denote the space of Schwartz functions on $\BC$, that is, smooth functions  on $\BC$ that rapidly decay at infinity along with all of their derivatives. 
If rapid decay also occurs at zero, then we say the functions are   Schwartz functions on $ \BC \smallsetminus \{0\} $, and the space of such functions is denoted by  $\mathscr S (\BC \smallsetminus \{0\})  $.

The Fourier transform $\widehat f  $ of a Schwartz function $f \in \SS(\BC)$ is defined by
\begin{equation*}%\label{1eq: Fourier, C}
\widehat f (u) = \sideset{ }{_\BC }{\iint} \, f (z) e(- \Tr  (uz) ) \, i d z \nwedge d \overline z,
\end{equation*}
with $\Tr (z) = z + \overline z$. We have $\widehat{\widehat{f}} (z) = f (- z) $.

\begin{cor}\label{cor: main}
	We have
\begin{equation}\label{eq: main 2}
\sideset{ }{_{\BC \smallsetminus \{0\}} }{\iint}  \hskip -3 pt  \boldJ_{\mu} \lp z \rp  \widehat f (z)  \frac {i d z \nwedge d \overline z} {  {|z|}} = \frac  1 2 \sideset{ }{_{\BC \smallsetminus \{0\}} }{\iint}  \hskip -2 pt  e \lp \Tr  \lp \frac 1 {2 u} \rp \rp \boldJ_{ \frac 12 \mu} \lp  \frac 1  { 16 u ^{ 2} } \rp f (u) \frac {i d u \nwedge d \overline u} {  {|u|}},
\end{equation}
for $f \in \SS (\BC)$, under the assumption $|\Re \mu | < \frac 1 2$.	Furthermore, \eqref{eq: main 2} remains valid for all values of $\mu$ if one assumes $\widehat f \in \SS (\BC \smallsetminus \{0\})$.
\end{cor}

}

\subsection{Remarks}
Spherical Bessel functions for $\SL_2 (\BC)$ were first discovered by Miatello and Wallach \cite{M-W-Kuz} in the study of the spherical Kuznetsov trace formula for real semisimple groups of real rank one.   Nonspherical Bessel functions for $\SL_2 (\BC)$ were found by Bruggeman, Motohashi and Lokvenec-Guleska \cite{B-Mo,B-Mo2} in generalizing the Kuznetsov trace formula to the nonspherical setting. In an entirely different way, they were recently rediscovered by the author \cite{Qi-Bessel} as the $\GL_2 (\BC)$ example of the  Bessel functions arising in the \Voronoi summation formula for $\GL_n (\BC)$.    In general, for $\mu \in \BC $ and $m \in \BZ$, the Bessel function of index $(\mu, m)$ is associated with the principal series representation  of $\SL_2 (\BC)$  induced from the character $\chiup_{\,\mu,\, m}\begin{pmatrix}
a & \\
& a\- 
\end{pmatrix} = |a|^{4 \mu} (a/|a|)^{  m}$.  When $ m = 0$, we are in the spherical case.

From the viewpoint of representation theory, Bessel functions for $\GL_2 (\BC)$ are defined in  parallel with those for $\GL_2 (\BR)$. The  Bessel function attached to an irreducible unitary representation  of $\GL_2 (\BR)$ or $\GL_2 (\BC)$ is defined as the integral kernel of a kernel formula for the Weyl element action on the associated Kirillov model. Such a kernel formula  for $\GL_2 (\BR)$ and $\GL_2 (\BC)$ lies in the center of the representation theoretic approach to the Kuznetsov trace formula; see \cite{CPS} and \cite{Qi-Kuz}. In the case of $\GL_2 (\BR)$ or $\SL_2 (\BR)$, there are three proofs of the kernel formula in \cite[\S 8]{CPS},  \cite{Mo-Kernel} and \cite[Appendix 2]{BaruchMao-Real}, while the  formula for $\GL_2 (\BC)$ or $\SL_2 (\BC)$ and its proofs may be found in \cite{B-Mo-Kernel2,Mo-Kernel2}, \cite{Baruch-Kernel} and \cite[\S 17, 18]{Qi-Bessel}. %However, from the analytic aspect,  Bessel functions for $\GL_2 (\BC)$ are much more difficult than those for $\GL_2 (\BR)$.

%Recently, as the rank-two example of the {\it first connection formula} (see \cite[\S 7]{Qi2}), the author rediscovers the formula of Bessel functions associated to representations of $\GL_2 (\BC)$ which were first found by Bruggeman and Motohashi in \cite{B-Mo} for principal series  representations of $\PSL_2 (\BC)$. 

%The aim of this article is to prove a formula for Fourier transforms of Bessel functions for $\GL_2 (\BC)$. 
The identity \eqref{eq: main 1} %or \eqref{eq: main 2} 
may be interpreted from the viewpoint of the Shimura-Waldspurger correspondence between spherical unitary representations of $\PGL_2 (\BC)$ and   ${\SL}_2 (\BC)$. This will enable  us to extend the Waldspurger formula to an {\it arbitrary} number field for the spherical case. % as in   \cite{BaruchMao-Global}. 

Finally, we remark that a similar identity should be expected to hold for nonspherical Bessel functions for $\PGL_2 (\BC)$.\footnote{At the time when the first draft for this note was written in February 2015, the author was not able to prove such an identity. The approach presented here fails to work in  the nonspherical case because there is no formula for classical Bessel functions that would be of any help. Some other attempts also resulted in failure.
	Recently, the author proved the general formula in \cite{Qi-II-G}, using an indirect method combining stationary phase and differential equations, along with a radial exponential integral formula. It turns out that the inductive arguments  in \cite{Qi-II-G} start  from the case $ m = \pm 2$ but not $m = 0$ ($m$    even), so the nonspherical case should {\it not} be considered as a straightforward extension of the spherical case.}

%In order to develop the spectral theory of the relative trace formula for the Waldspurger correspondence over the complex field and furthermore extend the Waldspurger formula to an arbitrary number field as in   \cite{BaruchMao-Real, BaruchMao-Global}, we still need a formula similar to \eqref{eq: main 1} or \eqref{eq: main 2} for nonspherical Bessel functions for $\PGL_2 (\BC)$.
 %Such a formula however seems difficult to prove as the author can not find formulae for classical Bessel functions that would be helpful for treating nonspherical Bessel functions for $\PGL_2 (\BC)$.

%Since $\SL_2(\BC)$ only admits trivial double cover, the  Shimura-Waldspurger correspondence is between representations of $\PGL_2 (\BC)$ and   representations of ${\SL}_2 (\BC)$. Therefore, in parallel to  \cite{BaruchMao-Real}, the formula that we shall prove will enable us to develop the spectral theory of the relative trace formula for the Waldspurger correspondence over the complex field. Furthermore, Waldspurger's formula may be extended to an arbitrary number field in the same ways as  \cite{BaruchMao-Global}.

% For $z \in \BC^\times$ define $[z] = z/|z|$.

%Let $\BCx = \BC \smallsetminus \{0\}$. For $z \in \BC^\times$ define $[z] = z/|z|$.

%The right hand side of \eqref{0eq: n=2, C} should be replaced by its limit  when  $4 \mu \in 2\BZ + m$. 

%We shall first prove the following proposition.  

\section{Preliminaries on Classical Bessel Functions}

Let $J_{\nu} (z)$, $Y                                                                                                                                                                                                                                                                                                                                                                                                                                                                                                                                                                                                                                                                                                                                                                                                                                                                                                                                                                                                                                                                                                                                                                                                                                                                                                                                                                                                                                                                                                                                                                                                                                                                                                                                                                                                                                                                                                                                                                                                                                                                                                                                                                                                                                                                                                                                                                                                                                                                                                                                                                                                                                                                                                                                                                                                                                                                                                                                                                                                                                                                                                                                                                                                                                                                                                                                                                                                                                                                                                                                                                                                                                                                                                                                                                                                                                                                                                                                                                                                                                                                                                                                                                                                                                                                                                                                                                                                                                                                                                                                                                                                                                                                                                                                                                                                                                                                                                                                                                                                                                                                                                                                                                                                                                                                                                                                                                                                                                                                                                                                                                                                                                                                                                                                                                                                                                                                                                                                                                                                                                                                                                                                                                                                                                                                                                                                                                                                                                                                                                                                                                                                                                                                                                                                                                                                                                                                                                                                                                                                                                                                                                                                                                                                                                                                                                                                                                                                                                                                                                                                                                                                                                                                                                                                                                                                                                                                                                                                                                                                                                                                                                                                                                                                                                                                                                                                                                                                                                                                                                                                                                                                                                                                                                                                                                                                                                                                                                                                                                                                                                                                                                                                                                                                                                                                                                                                                                                                                                                                                                                                                                                                                                                                                                                                                                                                                                                                                                                                                                                                                                                                                                                                                                                                                                                                                                                                                                                                                                                                                                                                                                                                                                                                                                                                                                                                                                                                                                                                                                                                                                                                                                                                                                                                                                                                                                                                                                                                                                                                                                                                                                                                                                                                                                                                                                                                                                                                                                                                                                                                                                                                                                                                                                                                                                                                                                                                                                                                                                                                                                                                                                                                                                                                                                                                                                                                                                                                                                                                                                                                                                                                                                                                                                                                                                                                                                                                                                                                                                                                                                                                                                                                                                                                                                                                                                                                                                                                                                                                                                                                                                                                                                                                                                                                                                                                                                                                                                                                                                                                                                                                                                                                                                                                                                                                                                                                                                                                                                                                                                                                                                                                                                                                                                                                                                                                                                                                                                                                                                                                                                                                                                                                                                                                                                                                                                                                                                                                                                                                                                                                                                                                                                                                                                                                                                                                                                                                                                                                                                                                                                                                                                                                                                                                                                                                                                                                                                                                                                                                                                                                                                                                                                                                                                                                                                                                                                                                                                                                                                                                                                                                                                                                                                                                                                                                                                                                                                                                                                                                                                                                                                                                                                                                                                                                                                                                                                                                                                                                                                                                                                                                                                                                                                                                                                                                                                                                                                                                                                                                                                                                                                                                                                                                                                                                                                                                                                                                                                                                                                                                                                                                                                                                                                                                                                                                                                                                                                                                                                                                                                                                                                                                                                                                                                                                                                                                                                                                                                                                                                                                                                                                                                                                                                                                                                                                                                                                                                                                                                                                                                                                                                                                                                                                                                                                                                                                         _{\nu} (z)$, $H^{(1,2)}_{\nu}  (z) $ denote the three kinds of Bessel functions of order $\nu$ and $I_{\nu}  (z) $ the modified Bessel function of the first kind of order $\nu$. 

$J_{\nu} (z)$ is defined by the series    (see \cite[3.1 (8)]{Watson})
\begin{equation}\label{2def: series expansion of J}
	J_{\nu} (z) = \sum_{n=0}^\infty \frac {(-)^n \lp \frac 1 2 z \rp^{\nu+2n } } {n! \Gamma (\nu + n + 1) }.
\end{equation}
We have the following connection formulae  (see \cite[3.61 (1, 2, 3, 4),  3.1 (8), 3.7 (2)]{Watson})
\begin{align}
	\label{2eq: J and H} 
	& J_\nu (z) = \frac {H_\nu^{(1)} (z) + H_\nu^{(2)} (z)} 2, \hskip 30 pt 
	J_{-\nu} (z) =  \frac {e^{\pi i \nu} H_\nu^{(1)} (z) + e^{-\pi i \nu} H_\nu^{(2)} (z) } 2, \\
	\label{2eq: Y and J} & Y_{\nu} (z) = \frac {J_{\nu} (z) \cos (\pi \nu)  - J_{- \nu} (z)} {\sin (\pi \nu)}, \hskip  10 pt Y_{- \nu} (z) = \frac {J_{\nu} (z)   - J_{- \nu} (z) \cos (\pi \nu)} {\sin (\pi \nu)}, \\
	\label{2eq: I and J} & I_{\nu} \big( e^{\pm \frac 1 2 \pi i } z \big)  = e^{\pm \frac 1 2 \pi i \nu } J_{\nu} (z)  .
\end{align}
We have the asymptotics of $H^{(1)}_{\nu} (z)$ and $H^{(2)}_{\nu} (z)$ (see \cite[7.2 (1, 2)]{Watson}),
\begin{equation}\label{2eq: asymptotic H (1)}
	H^{(1)}_{\nu} (z) = \lp \frac 2 {\pi z} \rp^{\frac 1 2} e^{ i \lp z - \frac 1 2 {\pi \nu}   - \frac 1 4 \pi    \rp }  \lp 1  + \frac {   1   - 4 \nu^2} {8 i z}  + O \lp \frac 1 { |z|^{  2} } \rp \rp,
\end{equation}
\begin{equation}\label{2eq: asymptotic H (2)}
	H^{(2)}_{\nu} (z) = \lp \frac 2 {\pi z} \rp^{\frac 1 2} e^{ - i \lp z - \frac 1 2{\pi \nu}    - \frac 1 4 \pi   \rp } \lp 1 - \frac {   1   - 4 \nu^2} {8 i z}  + O \lp \frac 1 { |z|^{  2} } \rp \rp,
\end{equation}
of which \eqref{2eq: asymptotic H (1)} is valid when $z$ is such that $- \pi +   \delta \leqslant \arg z \leqslant 2 \pi -  \delta$,  and  \eqref{2eq: asymptotic H (2)} when $- 2 \pi +  \delta \leqslant \arg z \leqslant   \pi -  \delta$,   $  \delta  $ being any positive acute angle. %Here,  $(\lambda)_{n} =  \textstyle \prod_{k = 0}^{n-1} (\lambda + k) $.

\section{Two Formulae for Classical Bessel Functions}

In this section, we shall give two formulae for  classical Bessel functions that will be used for the proof of Theorem \ref{thm: main 1}.

\subsection{ }\label{sec: 1st formula}

We have Weber's second exponential integral formula \cite[13.31 (1)]{Watson}
\begin{equation}\label{eq: Weber integral, 0}
	\int_0^\infty J_{\nu} (a x) J_{\nu} (b x) \exp \big(- p^2 x^2\big)  x d x = \frac 1 {2 p^2} \exp \lp - \frac {a^2 + b^2} {4 p^2} \rp I_{\nu} \lp \frac {a b} {2 p^2} \rp.
\end{equation}
It is required that $\Re \nu > - 1$ and $|\arg p| < \frac 1 4\pi$ to secure absolute convergence, but $a$ and $b$ are unrestricted nonzero complex numbers. It follows that
\begin{equation}\label{eq: Weber integral}
	\begin{split}
		\int_0^\infty \big( J_{-\nu} \lp a \sqrt x \rp J_{-\nu} \lp \overline a \sqrt x \rp & -  J_{ \nu} \lp a \sqrt x \rp J_{ \nu} \lp \overline a \sqrt x \rp \big)  \exp \left(- p^2 x \right)  d x \\
		=  & \ \frac 1 { p^2}  \exp   \bigg( - \frac {a^2 + \overline a^2} {4 p^2} \bigg)    \lp  I_{-\nu} \lp \frac {|a|^2} {2 p^2} \rp - I_{ \nu} \lp \frac {|a|^2} {2 p^2} \rp \rp,
	\end{split}
\end{equation}
if  $|\Re \nu | < 1$,    $|\arg p| < \frac 1 4\pi$ and  $a \neq 0 $.
%Given that $|\Re \nu | < 1$ and $|\arg a | \leqslant \pi - 2 \delta$, with any $\frac 1 2 \pi > \delta > 0$, 
We claim that \eqref{eq: Weber integral}  is still valid even if $\arg p = \pm \frac 1 4 \pi$ in the sense that the left hand side of \eqref{eq: Weber integral} remains convergent and converges to the value on the right.

Since the expressions on both sides of \eqref{eq: Weber integral} are indeed independent on the argument of $a$ modulo $\pi$, we shall assume that $- \frac 1 2 \pi <  \arg a  \leqslant \frac 1 2 \pi $.

To prove the convergence, we first partition the integral in \eqref{eq: Weber integral} into two integrals over the intervals $(0, 1]$ and $[1, \infty)$ respectively. Since $|\Re \nu| < 1$, the first integral  is absolutely convergent due to the series expansions of $J_{\nu} (z)$ and $J_{- \nu} (z)$ at zero (see \eqref{2def: series expansion of J}).
As for the convergence of the second integral, using the connection formulae \eqref{2eq: J and H},
we write
\begin{align*}
	 J_{-\nu} \lp a \sqrt x \rp & J_{-\nu} \lp \overline a \sqrt x \rp - J_{ \nu} \lp a \sqrt x \rp J_{ \nu} \lp \overline a \sqrt x \rp \\ 
	= \ & 
	\frac {i \sin (\pi \nu) } 2 \lp e^{\pi i \nu} H^{(1)}_{ \nu} \lp a \sqrt x \rp H^{(1)}_{ \nu} \lp \overline a \sqrt x \rp -  e^{- \pi i \nu} H^{(2)}_{ \nu} \lp a \sqrt x \rp H^{(2)}_{ \nu} \lp \overline a \sqrt x \rp \rp.
\end{align*}
Therefore, in view of the   asymptotics  of $H^{( 1 )}_{ \nu} \lp z \rp $ and  $H^{( 2 )}_{ \nu} \lp z \rp $ at infinity (\ref{2eq: asymptotic H (1)}, \ref{2eq: asymptotic H (2)}),
with $ | \arg z | \leqslant \frac 1 2 \pi  $, we are reduced to the convergence of the following integrals
\begin{equation*}
	\int_1^\infty \frac 1 {\sqrt x} \exp \lp {\pm i (a + \overline a) \sqrt x}   {- p^2 x} \rp d x, \hskip 10 pt \int_1^\infty \frac 1 {  x} \exp \lp {\pm i (a + \overline a) \sqrt x}   {- p^2 x} \rp d x.
\end{equation*}
By partial integration, the former  turns into
\begin{align*}
	& \lp \frac 1 {p^2} \pm \frac {i (a + \overline a)} {2 p^4} \rp \exp   \lp \pm i (a + \overline a)  {- p^2  } \rp \\
	& - \lp \frac 1 {2p^2} + \frac {(a + \overline a )^2} {4 p^4} \rp \int_1^\infty   x^{-\frac 3 2}  \exp \lp {\pm i (a + \overline a) \sqrt x}   {- p^2 x} \rp d x \\
	& \mp \frac {i (a + \overline a )} {2 p^4} \int_1^\infty   x^{- 2}  \exp \lp {\pm i (a + \overline a) \sqrt x}   {- p^2 x} \rp d x,
\end{align*}
in which the integrals converge absolutely whenever $|\arg p| \leqslant \frac 1 4\pi$. The convergence of the latter may be proven in the same way. This completes the proof of the convergence. 

With the above arguments, it is easy to verify that the left hand side of \eqref{eq: Weber integral} gives rise to a continuous function  on the sector $\left\{ p : |\arg p| \leqslant \frac 1 4\pi \right\}$.  Then follows the second assertion in the claim. 

\vskip 7 pt

Rescaling $a$ by the factor $4 \pi$, letting $p^2 = 2 \pi e^{\mp \frac 1 2 \pi i} c$ and using the identity \eqref{2eq: I and J} to turn the $I_{\pm \nu}$   into $J_{\pm \nu}$, the claim above yields the following lemma.

\begin{lem}\label{2lem: the 1st Bessel identity}
	%When $b = \overline a$, Weber's integral is still valid if $\arg p = \pm \frac 1 4 \pi$ in the sense that the left hand side of \eqref{eq: Weber integral} remains convergent.
	Suppose that $|\Re \nu| < 1$,  $a \neq 0$ and $c > 0$. Then
	\begin{align*}
		%\begin{split}
		 \int_0^\infty & \lp J_{-\nu} \lp 4 \pi a \sqrt x \rp J_{-\nu} \lp 4 \pi \overline a \sqrt x \rp - J_{ \nu} \lp  4 \pi a \sqrt x \rp J_{ \nu} \lp  4 \pi \overline a \sqrt x \rp \rp e \left( \pm c x \right)  d x \\
		& \hskip 20 pt =   \mp \frac 1 {2 \pi i c} e \bigg(  \mp \frac {a^2 + \overline a^2} {c} \bigg) \lp e^{\mp \frac 1 2 \pi i \nu} J_{-\nu} \lp  \frac  {4 \pi |a|^2} {c} \rp - e^{\pm \frac 1 2 \pi i \nu} J_{ \nu} \lp \frac {4 \pi |a|^2} {c} \rp \rp.
		%\end{split}
	\end{align*}
\end{lem}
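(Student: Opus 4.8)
The plan is to obtain the lemma as a direct specialization of the extended form of Weber's second exponential integral, namely the boundary case of \eqref{eq: Weber integral} established in the claim above, by carrying out the three substitutions already indicated in the text: rescaling $a \mapsto 4\pi a$, setting $p^2 = 2\pi e^{\mp\frac12\pi i}c$, and converting the modified Bessel functions $I_{\pm\nu}$ back into $J_{\pm\nu}$ via \eqref{2eq: I and J}. The point motivating the choice of $p$ is that $e^{\mp\frac12\pi i} = \mp i$, so $-p^2 x = \pm 2\pi i c x$ and hence $\exp(-p^2 x) = e(\pm cx)$, turning the decaying weight on the left of \eqref{eq: Weber integral} into the oscillatory weight required by the lemma. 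With $c > 0$ this choice forces $\arg p = \pm\frac14\pi$, that is, $p$ lies exactly on the \emph{boundary} $|\arg p| = \frac14\pi$ of the region where \eqref{eq: Weber integral} was originally stated; it is precisely for this reason that the preceding extension of the claim to the closed sector $|\arg p| \leqslant \frac14\pi$ is needed, and I would invoke it here.

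With the substitution in hand, I would track the right-hand side of \eqref{eq: Weber integral} factor by factor. From $p^2 = 2\pi e^{\mp\frac12\pi i}c$ one gets $1/p^2 = e^{\pm\frac12\pi i}/(2\pi c) = \pm i/(2\pi c) = \mp 1/(2\pi i c)$, which produces the leading constant. After the rescaling $a \mapsto 4\pi a$ the exponential factor becomes $\exp\!\big(-4\pi^2(a^2+\overline a^2)/p^2\big)$, and substituting $1/p^2$ turns the exponent into $\mp 2\pi i (a^2+\overline a^2)/c$, i.e.\ the factor $e\big(\mp(a^2+\overline a^2)/c\big)$. Likewise the argument of each modified Bessel function becomes $16\pi^2|a|^2/(2p^2) = e^{\pm\frac12\pi i}\cdot 4\pi|a|^2/c$.

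Finally I would apply \eqref{2eq: I and J} with $z = 4\pi|a|^2/c$, which gives $I_{\pm\nu}\big(e^{\pm\frac12\pi i}z\big) = e^{\pm\frac12\pi i\nu}J_{\pm\nu}(z)$ and in particular $I_{-\nu}\big(e^{\pm\frac12\pi i}z\big) = e^{\mp\frac12\pi i\nu}J_{-\nu}(z)$; assembling these two terms reproduces exactly the combination $e^{\mp\frac12\pi i\nu}J_{-\nu}(4\pi|a|^2/c) - e^{\pm\frac12\pi i\nu}J_{\nu}(4\pi|a|^2/c)$ on the right of the lemma. I expect no genuine analytic difficulty at this stage: the only real content, the convergence of the integral along the boundary ray and continuity up to it, is already supplied by the claim. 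The one place demanding care, and the main (if modest) obstacle, is the bookkeeping of the coupled signs: the symbol $\mp$ in the exponent of $p^2$ must be propagated consistently through $1/p^2$, through the exponential factor, and through both applications of \eqref{2eq: I and J}, so that the upper and lower sign choices in the statement match simultaneously in every factor. I would verify the upper-sign case in full and note that the lower-sign case follows identically with all signs reversed.
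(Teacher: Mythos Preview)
Your proposal is correct and follows exactly the paper's own argument: the paper's proof of the lemma is precisely the one-line instruction to rescale $a$ by $4\pi$, set $p^2 = 2\pi e^{\mp\frac12\pi i}c$, and convert $I_{\pm\nu}$ to $J_{\pm\nu}$ via \eqref{2eq: I and J}, invoking the boundary case $|\arg p| = \tfrac14\pi$ of the claim established just before. Your detailed tracking of the factors and signs simply spells out that substitution, and the computations are all right.
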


% We also have the following connection formulae \cite[3.61 (3, 4)]{Watson}
%\begin{align}\label{1eq: Y and J}
%Y_{\nu} ( z )  = \frac { J_{\nu} (z) \cos (\pi \nu) - J_{-\nu} (z)} {\sin (\pi \nu)}, \hskip 10 pt Y_{- \nu} ( z )  = \frac { J_{\nu} (z)  - J_{-\nu} (z) \cos (\pi \nu)} {\sin (\pi \nu)}.
%\end{align}

\subsection{ }\label{sec: 2nd formula}

According to \cite[8.7 (22)]{ET-II}, we have the following formula
\begin{equation}\label{2eq: 2nd identity, EMOT}
	\begin{split}
		\int_0^1 J_{\nu} (a x) \frac {\sin \lp b  \sqrt {1-x^2} \rp} {\sqrt {1 - x^2 }} d x & - \int_1^\infty J_{\nu} (a x) \frac {\exp \lp - b \sqrt {x^2 - 1} \rp} {\sqrt {x^2 - 1}} d x \\
		& =   \frac \pi 2 \, J_{\frac 1 2 \nu} \lp\frac  { \sqrt {a^2 + b^2} - b } 2 \rp Y_{\frac 1 2 \nu} \lp\frac  { \sqrt {a^2 + b^2} + b  } 2 \rp.
	\end{split}
\end{equation}
%where $Y_{\frac 1 2 \nu}$ is the Bessel function of the second kind of order $\frac 1 2 \nu$.
It is assumed in  \cite{ET-II} that $\Re \nu > - 1$ and $a, b > 0 $. We shall prove however that  \eqref{2eq: 2nd identity, EMOT} is valid even if $\Re b \geqslant 0$, provided that $\Re \nu > - 1$ and $a > 0$. 

Let  $\Re \nu > - 1$ and $a > 0$ be fixed. First of all, the first integral on the left hand side of \eqref{2eq: 2nd identity, EMOT} is absolutely convergent for any   complex number $b$, and, if one makes the additional assumption $\Re b \geqslant 0$, then so is the second. The formula \eqref{2eq: 2nd identity, EMOT} is a relation connecting functions of $b$ which are analytic on the right half plane $\left\{ b : \Re b  > 0 \right\}$ and extend continuously onto the  imaginary axis $\left\{ b : \Re b = 0 \right\}$, where the principal branch of the square root is assumed on the right hand side. Therefore, by the principle of analytic continuation, \eqref{2eq: 2nd identity, EMOT} remains valid whenever $\Re b  \geqslant 0 $. 

\vskip 7 pt 

We shall be particularly interested in the case when $\Re b = 0$ and $|\Im b | \leqslant a$. With the observation that the first integral in  \eqref{2eq: 2nd identity, EMOT} is odd with respect to $b$, along with the identities in \eqref{2eq: Y and J} that connect $Y_{\pm \frac 1 2 \nu}$ with $J_{ \frac 1 2 \nu}$ and $J_{ - \frac 1 2 \nu}$, we obtain the following lemma.
\begin{lem}
	\label{2lem: 2nd identity}
	Suppose that $|\Re \nu| < 1$, $a  > 0$ and $ - a \leqslant c \leqslant a$. If we  put $ w = \frac 1 2 \lp \sqrt {a^2  - c^2} + i c \rp$, then
	\begin{equation*}
		\begin{split}
			\int_1^\infty \lp  J_{- \nu} (a x) + J_{ \nu} (a x) \rp &  \frac {\cos \lp c \sqrt {x^2 - 1} \rp} {\sqrt {x^2 - 1}} d x \\
			= \ & \frac {\pi \cot \lp \frac 1 2\pi \nu \rp} 2  \lp J_{- \frac 1 2 \nu} \lp w \rp J_{- \frac 1 2 \nu} \lp \overline w \rp - J_{  \frac 1 2 \nu} \lp  w \rp J_{   \frac 1 2 \nu} \lp \overline w \rp  \rp.
		\end{split}
	\end{equation*}
	
\end{lem}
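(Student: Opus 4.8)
The plan is to derive the lemma by specializing the extended identity \eqref{2eq: 2nd identity, EMOT} at the pure imaginary value $b = ic$ and then symmetrizing. Since the continuous extension to the closed half plane $\Re b \geqslant 0$ has already been established, we may legitimately set $b = ic$ with $c$ real. The constraint $-a \leqslant c \leqslant a$ is exactly what makes $a^2 + b^2 = a^2 - c^2 \geqslant 0$, so that the principal square root $\sqrt{a^2 + b^2} = \sqrt{a^2 - c^2}$ is real and nonnegative; consequently the two arguments $\tfrac12\big(\sqrt{a^2-c^2} \mp ic\big)$ on the right of \eqref{2eq: 2nd identity, EMOT} are precisely $\overline w$ and $w$. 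With $b = ic$, the first integrand acquires a factor $\sin\big(ic\sqrt{1-x^2}\big) = i\sinh\big(c\sqrt{1-x^2}\big)$, which is \emph{odd} in $c$, while the second integrand carries $\exp\big(-ic\sqrt{x^2-1}\big)$, of modulus one; combined with the $x^{-1/2}$ decay of $J_\nu(ax)$ and the $x^{-1}$ factor, the tail integral converges absolutely, so all the manipulations below are justified. Thus \eqref{2eq: 2nd identity, EMOT} reads, at $b = ic$, as an identity whose right-hand side is $\tfrac{\pi}{2}\,J_{\frac12\nu}(\overline w)\,Y_{\frac12\nu}(w)$.

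Next I would symmetrize in $c$. Writing the identity once for $c$ and once for $-c$ and adding the two, the odd first integral cancels, while the two exponentials combine via $e^{-ic\sqrt{x^2-1}} + e^{ic\sqrt{x^2-1}} = 2\cos\big(c\sqrt{x^2-1}\big)$; since $c \mapsto -c$ interchanges $w$ and $\overline w$, the result is
\[
-2\int_1^\infty J_\nu(ax)\,\frac{\cos\big(c\sqrt{x^2-1}\big)}{\sqrt{x^2-1}}\,dx = \frac{\pi}{2}\Big(J_{\frac12\nu}(\overline w)\,Y_{\frac12\nu}(w) + J_{\frac12\nu}(w)\,Y_{\frac12\nu}(\overline w)\Big).
\]
Adding to this the same identity with $\nu$ replaced by $-\nu$ (this is where the hypothesis $|\Re\nu| < 1$ is used, so that both $\Re(\pm\nu) > -1$) produces the integrand $J_\nu(ax) + J_{-\nu}(ax)$ on the left and a four-term expression $\sum_{\pm}\big(J_{\pm\frac12\nu}(\overline w)Y_{\pm\frac12\nu}(w) + J_{\pm\frac12\nu}(w)Y_{\pm\frac12\nu}(\overline w)\big)$ on the right.

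It remains to collapse that four-term sum using the connection formulae \eqref{2eq: Y and J} at order $\tfrac12\nu$. Substituting $Y_{\pm\frac12\nu}$ in terms of $J_{\frac12\nu}$ and $J_{-\frac12\nu}$ and expanding, the off-diagonal cross terms $J_{\frac12\nu}(w)J_{-\frac12\nu}(\overline w)$ and $J_{-\frac12\nu}(w)J_{\frac12\nu}(\overline w)$ cancel in pairs, while the diagonal terms combine to $2\cot\big(\tfrac12\pi\nu\big)\big(J_{\frac12\nu}(w)J_{\frac12\nu}(\overline w) - J_{-\frac12\nu}(w)J_{-\frac12\nu}(\overline w)\big)$. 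Dividing by $-2$ and rearranging the sign then yields exactly the asserted identity. The main obstacle is precisely this last bookkeeping: one must verify that the $J_{\pm}Y_{\mp}$ cross terms cancel and that the diagonal terms assemble into a single cotangent with the correct sign; the rest is the routine (but careful) use of the parity of the first integral and the $c \mapsto -c$, $\nu \mapsto -\nu$ symmetrizations, together with the observation that $|c| \leqslant a$ is what forces $w$ and $\overline w$ to be genuine complex conjugates.
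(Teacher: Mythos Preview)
Your proposal is correct and follows essentially the same route as the paper: specialize \eqref{2eq: 2nd identity, EMOT} to $b = ic$ with $|c| \leqslant a$, exploit the oddness of the first integral in $b$ to kill it by the $c \mapsto -c$ symmetrization, then add the $\nu \mapsto -\nu$ copy (this is where $|\Re \nu| < 1$ enters) and collapse the resulting $J_{\pm \frac12 \nu} Y_{\pm \frac12 \nu}$ terms via \eqref{2eq: Y and J}. Your write-up simply makes explicit the bookkeeping that the paper compresses into a single sentence.
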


\section{Proof of Theorem \ref{thm: main 1}}

With the preparations in the last section, we are now ready to prove Theorem \ref{thm: main 1}. Indeed, we shall prove the following reformulation of Theorem \ref{thm: main 1}. % that is, the following identity 

\begin{prop}\label{3prop: unramified} 
Suppose that $|\Re \mu| < \frac 1 2$. 
The iterated double integral 
\begin{equation}\label{3eq: integral, Fourier}
\begin{split}
  \int_{0}^{2 \pi} \int_0^\infty \Big(   J_{- 2 \mu } \big(4 \pi e^{ \frac 1 2 i\phi} & \sqrt x \big)   J_{- 2 \mu  } \big( 4 \pi  e^{- \frac 1 2 i\phi} \sqrt x  \big) - \\
  & J_{2 \mu } \big(4 \pi e^{ \frac 1 2 i\phi} \sqrt x \big)   J_{2 \mu  } \big( 4 \pi  e^{- \frac 1 2 i\phi} \sqrt x  \big) \Big)
   e (\pm 2 x y \cos (\phi + \theta )  )  d x \, d \phi,
\end{split} 
\end{equation}
with $y \in (0, \infty)$ and $  \theta \in \left ( - \frac 1 2 \pi, \frac 1 2 \pi \right]$, is equal to
\begin{align*}
\frac { \cos \lp \pi \mu \rp} {2 y} e \lp \mp \frac {\cos \theta} y \rp
\lp J_{- \mu} \lp \frac {\pi} {y e^{  i \theta}} \rp J_{- \mu} \lp \frac {\pi} {y e^{ - i \theta}} \rp -
J_{ \mu} \lp \frac {\pi} {y e^{  i \theta}} \rp J_{ \mu} \lp \frac {\pi} {y e^{ - i \theta}} \rp  \rp.
\end{align*}
\end{prop}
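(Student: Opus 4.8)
The plan is to evaluate the iterated integral by carrying out the two integrations in succession: first the inner integral over $x$ by means of Lemma \ref{2lem: the 1st Bessel identity}, and then the surviving integral over $\phi$ by converting it, through the substitution $x = \sec\psi$, into exactly the integral evaluated in Lemma \ref{2lem: 2nd identity}. Throughout I would take $\nu = 2\mu$, so that the standing hypothesis $|\Re\mu| < \frac12$ is precisely the condition $|\Re\nu| < 1$ demanded by both lemmas.

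For the inner integral I would fix $\phi$ and apply Lemma \ref{2lem: the 1st Bessel identity} with $a = e^{\frac12 i\phi}$ (so that $\overline a = e^{-\frac12 i\phi}$, $|a|^2 = 1$ and $a^2 + \overline a^2 = 2\cos\phi$) and with $c = 2y\cos(\phi+\theta)$. Since the lemma requires $c > 0$, one must split the $\phi$-range into the arc where $\cos(\phi+\theta) > 0$ and the arc where $\cos(\phi+\theta) < 0$; on the latter the sign $\pm$ is reversed and $c$ is replaced by $2y|\cos(\phi+\theta)|$. Setting $\psi = \phi+\theta$ and using $\cos\phi/\cos\psi = \cos\theta + \tan\psi\,\sin\theta$, the resulting exponential factors as $e(\mp\cos\theta/y)\,e(\mp\tan\psi\,\sin\theta/y)$, and the $\phi$-independent factor $e(\mp\cos\theta/y)$ pulls outside the integral, already matching the exponential in the asserted answer.

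The key algebraic step is to recombine the two arcs. On the arc where $\cos\psi < 0$ I would substitute $\psi \mapsto \psi + \pi$, sending it onto $(-\tfrac12\pi, \tfrac12\pi)$; this turns $|\cos\psi|$ into $\cos\psi$, leaves the Bessel arguments $2\pi/(y\cos\psi)$ and the weight $e(\mp\tan\psi\,\sin\theta/y)$ unchanged, and merely interchanges the two exponential prefactors $e^{\mp\pi i\mu}$ and $e^{\pm\pi i\mu}$. Adding the two contributions, the differences $e^{\mp\pi i\mu}J_{-2\mu} - e^{\pm\pi i\mu}J_{2\mu}$ collapse into $\pm(e^{\pm\pi i\mu} - e^{\mp\pi i\mu})(J_{-2\mu} + J_{2\mu}) = 2i\sin(\pi\mu)\,(J_{-2\mu} + J_{2\mu})$. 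This produces exactly the sum $J_{-2\mu}+J_{2\mu}$ appearing in Lemma \ref{2lem: 2nd identity}, and the factor $\sin(\pi\mu)$ is what will eventually generate the cotangent.

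Finally I would substitute $x = \sec\psi$ on $(-\tfrac12\pi, \tfrac12\pi)$. As $\sec\psi$, the Bessel factors and $1/\cos\psi$ are even while $\tan\psi$ is odd, the odd part of $e(\mp\tan\psi\,\sin\theta/y)$ integrates to zero and it may be replaced by $\cos(2\pi\sin\theta\,\tan\psi/y)$; with $\tan\psi = \sqrt{x^2-1}$ and $\sec\psi\,d\psi = dx/\sqrt{x^2-1}$, the integral becomes precisely the left-hand side of Lemma \ref{2lem: 2nd identity} with $a = 2\pi/y$ and $c = 2\pi\sin\theta/y$, the constraint $-a\leqslant c\leqslant a$ being automatic. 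That lemma yields $w = \tfrac12(\sqrt{a^2-c^2}+ic) = (\pi/y)e^{i\theta} = \pi/(ye^{-i\theta})$, matching the product structure of the claim, and the constants assemble as $\frac{\sin(\pi\mu)}{2\pi y}\cdot 2\cdot\frac{\pi\cot(\pi\mu)}{2} = \frac{\cos(\pi\mu)}{2y}$. The main obstacle I anticipate is not the algebra but the analytic bookkeeping: justifying convergence of the iterated integral near the points where $\cos(\phi+\theta)=0$ (where Lemma \ref{2lem: the 1st Bessel identity} degenerates), which under $x=\sec\psi$ correspond to convergence of the Lemma \ref{2lem: 2nd identity} integral at $x=\infty$, together with a scrupulous tracking of the $\pm$ signs through the splitting and the recombination.
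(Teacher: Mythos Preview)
Your proposal is correct and follows essentially the same route as the paper: apply Lemma \ref{2lem: the 1st Bessel identity} to the radial integral after splitting the angular range according to the sign of $\cos(\phi+\theta)$, extract the factor $e(\mp\cos\theta/y)$, recombine so that the Bessel terms collapse to $\sin(\pi\mu)(J_{-2\mu}+J_{2\mu})$, and finish with the substitution $t=\sec\psi$ and Lemma \ref{2lem: 2nd identity}. The only cosmetic difference is that the paper partitions the angular circle into four quadrants and combines the resulting exponentials into sines and cosines, whereas you use two arcs together with a parity argument; the computations are otherwise identical.
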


%where we shall later choose $\nu$   to be either $ 2 \mu$ or  $- 2 \mu$.

\begin{proof}
	
	Our proof   combines the applications of Lemma \ref{2lem: the 1st Bessel identity}  to the radial integral over $  x$ and then Lemma \ref{2lem: 2nd identity} to the angular integral over $  \phi$.
	
We start with dividing the domain of   integration into four quadrants. Indeed, if we substitute $\phi$ by $\phi - \theta$, partition the  domain of  $\phi$ into four open intervals $\lp 0, \frac 1 2 \pi \rp$, $\lp \frac 1 2 \pi,   \pi \rp$,  $\lp \pi, \frac 3 2 \pi \rp$ and  $\lp \frac 3 2 \pi, 2 \pi \rp$, and make a suitable change of variables for each resulting integral, then the   integral \eqref{3eq: integral, Fourier} turns into the sum of four similar integrals, the first of which is
\begin{equation}\label{eq: double integral 1}
\begin{split}
\int_{0}^{\frac 1 2 \pi} \int_0^\infty \Big( J_{- 2 \mu }  \big(4 \pi & e^{ \frac 1 2 i(\phi - \theta)}  \sqrt x \big)   J_{- 2 \mu  } \big( 4 \pi  e^{- \frac 1 2 i (\phi - \theta)} \sqrt  x  \big) - \\
& J_{ 2 \mu } \big(4 \pi e^{ \frac 1 2 i(\phi - \theta)}  \sqrt x \big)   J_{ 2 \mu } \big( 4 \pi  e^{- \frac 1 2 i (\phi - \theta)} \sqrt  x  \big) \Big) e(\pm 2 x  y \cos \phi  )  d x \, d \phi .
\end{split}
\end{equation}
%Subsequently, we shall only consider the integral \eqref{eq: double integral 1}, whereas the other three integrals can be treated in exactly the same way.
For $\phi \in \lp 0 , \frac 1 2 \pi \rp$, 
upon choosing $\nu = 2 \mu$, $a =  e^{ \frac 1 2 i(\phi - \theta)}$, $c = 2 y \cos \phi $ in Lemma \ref{2lem: the 1st Bessel identity}, we deduce that the  integral \eqref{eq: double integral 1} is equal to
\begin{align*}
& \mp \frac 1 {4 \pi i y} \int_{0}^{\frac 1  2 \pi} \frac 1 {\cos \phi}  e \lp  \mp \frac { \cos \lp \phi - \theta \rp } {   y \cos \phi} \rp \lp e^{\mp \pi i \mu} J_{ - 2 \mu } \lp  \frac {2 \pi  } {   y \cos \phi} \rp - e^{ \pm \pi i \mu} J_{ 2 \mu } \lp  \frac {2 \pi  } {   y \cos \phi} \rp \rp d \phi \\
& =  \mp \frac 1 {4 \pi i y} e \lp \mp \frac {\cos \theta} y \rp \int_{0}^{\frac 1  2 \pi} \frac 1 {\cos \phi}  e \lp \mp  \frac { \sin \theta \tan \phi } {   y } \rp \\
& \hskip 150 pt \lp e^{\mp \pi i \mu} J_{ - 2 \mu} \lp  \frac {2 \pi  } {   y \cos \phi} \rp - e^{ \pm \pi i \mu} J_{ 2 \mu} \lp  \frac {2 \pi  } {   y \cos \phi} \rp \rp d \phi.
\end{align*}
The other three integrals that are similar to \eqref{eq: double integral 1} can be computed in the same way. Summing these up, with certain exponential factors combined  into sine and cosine, the integral \eqref{3eq: integral, Fourier} turns into 
\begin{equation}\label{3eq: outer integral, 1}
\begin{split}
\frac {\sin (\pi \mu) } { \pi   y} e \lp \mp \frac {\cos \theta} y \rp \int_{0}^{\frac 1  2 \pi} \frac 1 {\cos \phi}  \cos  & \lp   \frac { 2 \pi \sin \theta \tan \phi } {   y } \rp  \\
& \lp J_{ - 2 \mu} \lp  \frac {2 \pi  } {   y \cos \phi} \rp + J_{ 2 \mu} \lp  \frac {2 \pi  } {   y \cos \phi} \rp \rp d \phi.
\end{split}
\end{equation}
Making the change of variables $t = 1/ \cos \phi$,  the   integral in \eqref{3eq: outer integral, 1} becomes
\begin{equation} \label{eq: intgral 3}
\begin{split}
\int_1^\infty \lp J_{- 2 \mu} \lp \frac {2 \pi} y t \rp + J_{ 2 \mu} \lp \frac {2 \pi} y t \rp \rp \cos \lp \frac {2 \pi \sin \theta} y \sqrt {t^2 - 1} \rp \frac {d t} {\sqrt {t^2 - 1}} .
\end{split}
\end{equation}
Applying Lemma \ref{2lem: 2nd identity} with $\nu = 2 \mu$, $a =   {2 \pi} / y$, $c = {2 \pi \sin \theta} / y$ and $w = \pi e^{i \theta} /y $, the integral \eqref{eq: intgral 3} is then equal to
\begin{align*}
\frac {\pi \cot \lp \pi \mu \rp} 2
\lp J_{- \mu} \lp \frac {\pi} {y e^{  i \theta}} \rp J_{- \mu} \lp \frac {\pi} {y e^{ - i \theta}} \rp -
 J_{\mu} \lp \frac {\pi} {y e^{  i \theta}} \rp J_{\mu} \lp \frac {\pi} {y e^{ - i \theta}} \rp  \rp.
\end{align*}
Taking  into account the factors in front of the integral in \eqref{3eq: outer integral, 1}, the proof is now complete.
\end{proof}

\begin{acknowledgement}
	The author would like to thank  the  referees for  their  helpful suggestions.
\end{acknowledgement}

%Suppose that $|\Re \mu| < \frac 1 2$. In view of the definition of $J_{(\mu, 0)}$ given by \eqref{1eq: n=2, C} and \eqref{1def: J mu m (z), n=2, C}, Proposition \ref{3prop: unramified} yields the identity

	\bibliographystyle{alphanum}
	%    Insert the bibliography data here.
	\bibliography{references}
	
	%\end{spacing}

\end{document}